\documentclass{amsart}
\usepackage{latexsym,amsmath,amsthm,amssymb}

\pagestyle{plain}

\newtheorem{theorem}{Theorem}[section]

\newtheorem{proposition}[theorem]{Proposition}

\theoremstyle{remark}
\newtheorem{remark}[theorem]{Remark}

\theoremstyle{definition}

\newtheorem{definition}[theorem]{Definition}




\begin{document}

\title{Remarks on the Afriat's theorem and the Monge--Kantorovich problem}

\begin{abstract}
The  famous Afriat's theorem from the theory of revealed preferences establishes 
necessary and sufficient conditions for existence of utility function for a given set of choices and prices.
The result on existence of a {\it homogeneous} utility function  can be considered as a particular fact of the Monge--Kantorovich mass transportation theory.
In this paper we explain this viewpoint and discuss some related questions.
\end{abstract}

\author{Alexander~V.~Kolesnikov}
\address{ Higher School of Economics, Moscow,  Russia}
\email{Sascha77@mail.ru}

\vspace{5mm}
\author{Olga~V. Kudryavtseva}
\address{
Moscow State University, Departments of Economics, Moscow, Russia
}
\email{kudryavtseva@msu.econ.ru}

\vspace{5mm}
\author{Tigran Nagapetyan}
\address{
Fraunhofer ITWM, Kaiserslautern, Germany
}
\email{tigran.nagapetyan@itwm.fhg.de}

\thanks{{\it Subject classification}: JEL: D11; MSC: 90C08}

\thanks{
This study was carried out within ''The National Research University Higher School of Economics`` Academic Fund Program in 2012-2013, research grant No. 11-01-0175. The first author was supported by RFBR projects 10-01-00518, 11-01-90421-Ukr-f-a, and the program SFB 701 at the University of Bielefeld.
The second author was  supported by  RFBR project 12-06-33030a.
We thank Professor Atshushi Kajii for his interest and stimulating discussions.
}

 \keywords{Afriat's theorem,  revealed preferences, utility functions, Monge--Kantorovich problem, optimal transportation, cyclical monotonicity,  Gauss curvature}

\maketitle


\section{Afriat's theorem}

The first description of the concept of the revealed preferences  can be find in the  work of Samuelson \cite{Sam},
where he presented the weak axiom of revealed preferences.  The strong axiom of revealed preferences (SARP)
was introduced by Houthakker \cite{Hout}. 
It was shown by Afriat  \cite{Afriat} that SARP is a necessary and sufficient condition for  existence of 
an appropriate utility function
for a finite set of choices 
and prices observed (this is  called the rationalization of the preferences relations).
Later Varian \cite{Varian82}, \cite{Varian83} extended the method of \cite{Afriat} by providing  tests for homothetic and additive separability and rationalizing models of behavior.

The connection between the Afriat's theorem and the Monge--Kantorovich problem is known (see, for instance, \cite{Levin}, \cite{Shananin}, \cite{Shananin2} for the connection with the so-called 
"Monge--Kantorovich optimal transshipment problem"). One can also mention the shortest path problem, which is known to be related to the Afriat's theorem 
after Varian \cite{Varian82}, \cite{Varian83}.
This  problem has a solution under assumption 
of absence of negative cycles, which in turn can be viewed as a ''cyclical monotonicity'' assumption.  See chapter 9 in \cite{AMJ} for the description of the shortest path problem is terms of
the linear programing duality.
See also \cite{Rochet}, where the relation with the Rockafellar's cyclical monotonicity theorem is discussed.
Nevertheless, the authors find that
 an instructive and short description of this relation  is somehow missing in the literature. We fill this gap and, applying some recent results on the Monge--Kantorovich problem, give a complete
characterization of the rationalizable data sets from the "transportational" viewpoint. Some related results
based on duality, linear programing, cyclical monotonicity  etc. were obtained in \cite{FST}, \cite{BC}, \cite{Kann}, \cite{Diew}, \cite{FKM}.
See also \cite{Ekeland} for another variational interpretation of the Afriat's theorem. 
For an account in the Monge--Kantorovich problem the reader is referred to \cite{BoKo}, \cite{Vill}.

In the standard model we have $m$ different goods and $n$ observations represented by vectors $X^i \in \mathbb{R}^m_{+}$,
 $1 \le i \le n$
$$
X^i = (x^i_1, \cdots, x^i_m)
$$
with corresponding vectors of prices 
$$P^i =  (p^i_1, \cdots, p^i_m) \in \mathbb{R}_{+}^m
$$
This means that the quantity $x^i_k$ of the  $k$-th good was bought at the price $p^i_k$. 
Thus the total amount of money spent by the i-th customer equals to
$$
\langle X^i, P^i \rangle = \sum_{j=1}^m x^i_j p^i_j.
$$
\begin{remark}
Just for the sake of simplicity we deal with the space $\mathbb{R}_{+}^m$ ($\mathbb{R}_{+} = (0, +\infty)$)
of vectors with {\bf positive} coordinates (zero price and zero consumed amount  of any  good is prohibited).
\end{remark}

A general tool of many classical models in economics is the so-called utility function $u : \mathbb{R}_{+}^m\to \mathbb{R}$.
Given an  utility function $u$ we say that  a customer  prefers $X \in \mathbb{R}_{+}^m$   to $Y \in \mathbb{R}_{+}^m$
iff 
$$
u(X) \ge  u(Y).
$$ 
\begin{remark} It is a standard and natural assumption in the utility function theory that $u$ is homogeneous:
$$u(t X) = t u(X), \ \  X \in \mathbb{R}_{+}^m, \ \ t \in \mathbb{R}_+.$$ In our paper 
the utility functions we deal with are always homogeneous (except of Section 3!).
\end{remark}

Under which assumptions on a given data set there exists a utility function 
 that is consistent with this set of observations (choices)? This was the problem solved by Afriat.
Let us describe a systematical approach based on natural modeling of the customer's behavior. 
We always assume that given a fixed price vector $P^i$ the customer always choose
the most preferable combination of goods $X^i$, i.e. 
$u$ attains its maximal value  on the set 
$\{ Y: \langle Y, P^i \rangle \le \langle X^i, P^i \rangle, \ \ Y \in \mathbb{R}^{m}_{+}\}$.

\begin{definition}
\label{main-def}
We say that the set $\{(X^i,P^i), \ 1 \le i \le n\}$ admits an utility function $u$ (or $u$ rationalizes this set) if  $u(Y) < u(X^i)$ for every $ Y \in \mathbb{R}^{m}_{+}$ satisfying
$ \langle Y, P^i \rangle < \langle X^i, P^i \rangle$.
\end{definition}

\begin{remark}
Let $u$ be continuous. Then this definition has a simple geometrical meaning:  every hyperplane $\{ Y : \langle P^i, Y - X^i \rangle =0\}$ is supporting to the set $\{Y: u(Y) \ge u(X^i)\}$.
\end{remark}

Necessary and sufficient condition for existence of $u$ for a given data set
was obtained in \cite{Afriat} (see Theorem \ref{Af-dis} below).

\section{Monge--Kantorovich problem}

\begin{remark}
In contrary to the previous section, we denote below the finite sets in $\mathbb{R}^m \times \mathbb{R}^m$ by
$(x_i,y_i)$ instead of $(X^i,P^i)$.
\end{remark}

In the modern formulation of the Monge--Kantorovich problem one considers a couple of  probability measures $\mu$ and $\nu$ on $\mathbb{R}^m$
and a {\bf cost function} $c(x,y)$. 

\begin{definition}
 Denote by $P_{\mu,\nu}$  the set of probability measures on  $X \times Y = \mathbb{R}^m \times \mathbb{R}^m$ satisfying
$$
\mbox{Pr}_X  P = \mu, \ \ \mbox{Pr}_Y P = \nu.
$$
\end{definition}
Here $\mbox{Pr}_X P$, $\mbox{Pr}_Y P$ are projections of $P$ onto $X$, $Y$ respectively, i.e. measures defined by
$$
\mbox{Pr}_X(A) = P(A \times Y), \ \mbox{Pr}_Y(B) = P(X \times B).
$$

The measure $P$ on $\mathbb{R}^m \times \mathbb{R}^m$ solves the Monge--Kantorovich problem if it satisfies  the following properties
\begin{itemize}
\item[1)] $P \in P_{\mu,\nu}$
\item[2)] $P$ is the minimum of the functional $K(P) = \int c(x,y) \ dP$.
\end{itemize}

Interpreting $c(x,y)$ as a {\bf transportation cost} of some production unit from the point $x$ to the point $y$, the integral  $\int c(x,y) \ dP$
equals to the {\bf total cost} of transportation. The measures $\mu$ and $\nu$ are initial and final distribution of the total production respectively.

We give the following  example. 
Let $P$ be the uniform distribution on a discrete set $\{(x_i,y_i), \ 1 \le i \le n\}$,
i.e. $P((x_i,y_i))=\frac{1}{n}$.
We consider  $P$ to be a candidate to solve the Monge--Kantorovich problem for $\mu = Pr_X$, $\nu = Pr_Y$. 
The total cost equals to
$$
\frac{1}{n} \sum_{i=1}^{n} c(x_i,y_i).
$$
Now let $\sigma \in S_m$ be any {\bf permutation} of indices. Take a measure $P_{\sigma}$ which is 
the uniform distribution on the set $$
S_{\sigma} = \{(x_i,y_{\sigma(i)})\}, \ 1 \le i \le n\}.
$$
Note that $P_{\sigma}$ still has the same projections. The new total transportation cost equals to
$\frac{1}{n}\sum_{i=1}^{n} c(x_i,y_{\sigma(i)})$.
Thus, a necessary condition for being optimal in the Monge--Kantorovich sense is the following inequality between total costs:
\begin{equation}
\label{c-monot}
\sum_{i=1}^{n} c(x_i,y_i) \le \sum_{i=1}^{n} c(x_i,y_{\sigma(i)}).
\end{equation}
This observation leads  to the following definition
\begin{definition}
A set $A \subset X \times Y$ is called $c$-monotone if
 every finite subset $\{(x_i,y_i), \ 1 \le i \le n\} \subset A$ and
every permutation $\sigma \in S_n$ satisfies
(\ref{c-monot}).
\end{definition}
It is well-known that every permutation $\sigma$ can be decomposed into a product of several {\bf cyclical} permutations, i.e. permutations of the type
$$
\sigma(i_1) = i_2, \sigma(i_2)=i_3, \cdots, \sigma(i_{k-1})=i_k, \sigma(i_k)=i_1.
$$ 
This immediately gives us that $c$-monotonicity is equivalent to  {\bf $c$-cyclical monotonicity}.

\begin{definition}
A set $A \subset X \times Y$ is called $c$-cyclically monotone if
 every finite subset $\{(x_i,y_i), \ 1 \le i \le n\} \subset A$
 satisfies
\begin{equation}
\label{cycl-monot}
\sum_{i=1}^{n} c(x_i,y_i) \le \sum_{i=1}^{n} c(x_i,y_{i+1})
\end{equation}
with the argeement $y_{n}=y_1$.
\end{definition}

\begin{definition}
We say that $(x,y) \in \mathbb{R}^m \times \mathbb{R}^m$ belongs to {\bf $c$-superdifferential} of a function $u: \mathbb{R}^m \to \mathbb{R}$
if
$$
u(z) \le u(x) + c(z,y)-c(x,y)
$$
for every $z \in \mathbb{R}^m$.
\end{definition}

The  following theorem gives a full characterization of the solutions to the Monge--Kantorovich problem. 

\begin{theorem}
\label{solut-MK}
Let $X=Y$ be a complete, separable, metric space, $\mu$ and $\nu$ be Borel probability measures thereon.
Assume that $c(x,y) : X \times Y \to [0,+\infty)$ is a lower semi-continuous nonnegative cost function and $\int \int c(x,y) \ d \mu(x) \ d \nu(y) < \infty$. Let $\pi$ be a Borel probability measure on $X \times Y$ such that $Pr_X \pi = \mu$, $Pr_Y \pi = \nu$.
Then the following statements are equivalent
\begin{itemize}
\item[1)] $\pi$ is the solution to the Monge--Kantorovich problem with  marginals $\mu$, $\nu$ and the cost function $c$
\item[2)] there exists a $c$-cyclically monotone set $\Gamma$ satisfying $\pi(\Gamma)=1$
\item[3)] there exists a function $u$ and a  set $\Gamma$ satisfying $\pi(\Gamma)=1$ such that $\Gamma$ is contained in the $c$-superdifferential of $u$.
\end{itemize}
\end{theorem}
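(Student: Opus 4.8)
The plan is to establish the cycle of implications $1)\Rightarrow 2)\Rightarrow 3)\Rightarrow 1)$, since this is the standard and most economical route for such equivalences. The implication $2)\Rightarrow 3)$ is the classical Rockafellar-type construction adapted to the cost $c$, while $3)\Rightarrow 1)$ is an easy direct computation; the genuinely delicate step is $1)\Rightarrow 2)$, which requires a measure-theoretic argument showing that optimality concentrates the transport plan on a $c$-cyclically monotone set. Throughout I would use the finiteness assumption $\int\int c\,d\mu\,d\nu<\infty$ to guarantee integrability of $c$ against $\pi$ and against the competitor plans built below.

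For $2)\Rightarrow 3)$, assume $\Gamma$ is $c$-cyclically monotone with $\pi(\Gamma)=1$. Fix a base point $(x_0,y_0)\in\Gamma$ and define, for $z\in X$,
\[
u(z)=\inf\Bigl\{\,c(z,y_k)-c(x_k,y_k)+c(x_k,y_{k-1})-c(x_{k-1},y_{k-1})+\cdots+c(x_1,y_0)-c(x_0,y_0)\,\Bigr\},
\]
the infimum being taken over all finite chains $(x_1,y_1),\dots,(x_k,y_k)\in\Gamma$. The $c$-cyclical monotonicity of $\Gamma$ is exactly what prevents this infimum from being $-\infty$ (chaining back to $(x_0,y_0)$ would otherwise produce a violating cycle), so $u$ is well-defined with values in $[-\infty,+\infty)$, and one checks $u(x_0)$ is finite. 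A one-line manipulation of the chains shows that for every $(x,y)\in\Gamma$ and every $z\in X$ one has $u(z)\le u(x)+c(z,y)-c(x,y)$, i.e. $(x,y)$ lies in the $c$-superdifferential of $u$; thus the same $\Gamma$ works. (One may also replace $u$ by $\max(u,-N)$ or argue measurability separately, but the algebra is the core point.)

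For $3)\Rightarrow 1)$, suppose $\Gamma\subset\partial^c u$ and $\pi(\Gamma)=1$. Let $\pi'$ be any competitor with the same marginals $\mu,\nu$. For $\pi$-a.e. $(x,y)$ and every $z$ we have $u(z)-u(x)\le c(z,y)-c(x,y)$; integrating the pointwise inequality $u(x)-u(y')\le c(x,y')-c(x,y)$ appropriately — more precisely, writing $c(x,y)\le c(x,y')-u(?)+\dots$ — one arrives, after integrating in $x$ against the $x$-marginal of $\pi'$ and in $y$ against $\pi$, at $\int c\,d\pi\le\int c\,d\pi'$; here the functions $u$ integrate to the same quantity against $\mu$ (resp. $\nu$) under both plans because the marginals agree, and the finiteness hypothesis makes every term finite. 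Hence $\pi$ is optimal.

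The hard part is $1)\Rightarrow 2)$: given an optimal $\pi$, produce a $c$-cyclically monotone Borel set of full $\pi$-measure. I would argue by contradiction using a "cyclic swapping" construction. If no such set exists, one can extract, for some $n$, a Borel set of positive $\pi$-measure of $n$-tuples of points admitting a strictly cost-decreasing cyclic permutation; disintegrating $\pi^{\otimes n}$ over this set and averaging the induced local swaps yields a new plan $\tilde\pi$ with the same marginals but strictly smaller cost, contradicting optimality. Making this rigorous requires a measurable selection of the improving permutations and a careful definition of the perturbed plan so that its marginals are unchanged — this is where the completeness and separability of $X=Y$ and lower semicontinuity of $c$ enter (they guarantee the relevant sets are analytic/Borel and the selections exist). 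Since this is precisely Theorem \ref{solut-MK} as it appears in the standard references, I would either reproduce the swapping argument in this measure-theoretic form or simply cite \cite{Vill}, \cite{BoKo} for this implication and give full details only for the elementary directions $2)\Rightarrow 3)$ and $3)\Rightarrow 1)$.
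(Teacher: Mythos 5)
Your outline is reasonable for the two ``soft'' directions, but it misplaces where the real difficulty lies, and that misplacement is a genuine gap. The step you call ``an easy direct computation'', namely $3)\Longrightarrow 1)$, is precisely the deep part of the theorem under the stated hypotheses. The duality argument you sketch requires introducing the $c$-transform $\varphi(y)=\inf_z\bigl(c(z,y)-u(z)\bigr)$, so that $u(x)+\varphi(y)\le c(x,y)$ everywhere with equality on $\Gamma$, and then splitting $\int\bigl(u(x)+\varphi(y)\bigr)\,d\pi'=\int u\,d\mu+\int\varphi\,d\nu$ for an arbitrary competitor $\pi'$. This splitting is only legitimate if $u\in L^1(\mu)$ and $\varphi\in L^1(\nu)$, and the hypothesis $\int\!\!\int c\,d\mu\,d\nu<\infty$ does \emph{not} grant this: for a merely lower semicontinuous, finite, nonnegative cost the potential produced by the Rockafellar--R\"uschendorf construction (your $2)\Rightarrow 3)$ step) need not be integrable, and the naive computation breaks down at exactly the point where your write-up becomes garbled (``$c(x,y)\le c(x,y')-u(?)+\dots$''). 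Making sufficiency of the pointwise/monotonicity condition work in this generality is the content of the ``relatively recent'' result of Schachermayer and Teichmann \cite{SchT} (equivalently the implication $2)\Longrightarrow 1)$, since $3)\Longrightarrow 2)$ is the trivial summation), and it proceeds by a more delicate approximation/strong-monotonicity argument, not by the one-line integration. If you strengthen the hypotheses to a bounded continuous cost (or compactly supported $\pi$, as the paper effectively assumes in its applications), the potentials can be taken bounded and your computation is fine; for the theorem as stated you must either reproduce the argument of \cite{SchT} or cite it.

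For comparison: the paper does not prove this theorem at all, but assembles it from the literature --- the only written argument is the one-line proof of $3)\Longrightarrow 2)$ (sum the superdifferential inequalities along a cycle), with $2)\Longleftrightarrow 3)$ attributed to Rockafellar \cite{Roca} and R\"uschendorf \cite{Ruesch}, $1)\Longrightarrow 2)$ to Knott--Smith \cite{KS}, and $2)\Longrightarrow 1)$ to \cite{SchT}. Your explicit chain construction for $2)\Rightarrow 3)$ is exactly R\"uschendorf's argument and is essentially correct (with the small caveat that $u$ may be $-\infty$ off the first projection of $\Gamma$, though it is finite where it matters, so truncation is unnecessary and in fact does not obviously preserve the superdifferential inequality), and your plan to cite the literature for the swapping argument in $1)\Rightarrow 2)$ is consistent with what the paper itself does. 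The defect is solely that you treat the sufficiency direction as elementary when, under these hypotheses, it is the theorem's hardest claim.
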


\begin{remark}
It is easy to check that the theorem holds also for $c$ uniformly bounded from below: $c(x,y) \ge K$, $K \in \mathbb{R}$.
We will  use the theorem in the case of {\bf continuous} cost functions only. 
\end{remark}

The facts collected in this theorem are the cornerstones of the Monge--Kantorovich theory.
The equivalence of 2) and 3) for $c(x,y)=(x-y)^2$ was proved by Rockafellar (see \cite{Roca}). The relation 3) $\Longrightarrow$ 2)
is elementary. Indeed, one has for $(x_i, y_i) \in \Gamma$
$$
u(x_{i+1}) - u(x_i) \le c(x_{i+1},y_i) - c(x_i,y_i).
$$
Summing up these inequalities one obtains the desired cyclical monotonicity property. The equivalence  of 2) and 3) for
general $c$ was obtained by R{\"u}schendorf \cite{Ruesch}. The implication 1) $\Longrightarrow$ 2) is very well-known and was apparently discovered   for the first time by Knott and Smith \cite{KS} for $c=(x-y)^2$. The implication  
2) $\Longrightarrow$ 1) is relatively recent and was proved in sufficient generality in \cite{SchT}.

\begin{remark}
\label{c-concave}
One can always assume that the function $u$ from the item 3) of Theorem \ref{solut-MK} is defined on the whole $\mathbb{R}^m$ and is $c$-concave, i.e. there exists a function $\varphi(y)$ such that $u(x) = \inf_{y \in \mathbb{R}^m}(c(x,y)-\varphi(y))$.
\end{remark}

Let us discuss connections of the Monge--Kantorovich theory with the revealed preferences.
The key observation here is the following.

\begin{proposition}
\label{MK-A}
The set of data $\{(X^i,P^i)\}$ admits a positive homogeneous utility function $u$  if and only if
it is contained in the $c$-superdifferential of the function $v=\log u$  for $c(x,y) = \ln \langle x, y \rangle$. 
\end{proposition}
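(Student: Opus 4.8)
The plan is to reduce both sides of the asserted equivalence to one and the same family of elementary inequalities. First I would substitute $c(x,y)=\ln\langle x,y\rangle$ and $v=\log u$ into the definition of the $c$-superdifferential: the pair $(X^i,P^i)$ lies in the $c$-superdifferential of $v$ exactly when
\[
\log u(z)\le \log u(X^i)+\ln\langle z,P^i\rangle-\ln\langle X^i,P^i\rangle \qquad \text{for every } z\in\mathbb{R}^m_+ ,
\]
and exponentiating turns this into the scale-invariant estimate
\[
u(z)\le u(X^i)\,\frac{\langle z,P^i\rangle}{\langle X^i,P^i\rangle} \qquad \text{for every } z\in\mathbb{R}^m_+ . \qquad (\star)
\]
Thus it suffices to prove that, for a positive homogeneous $u$, the collection of inequalities $(\star)$ over $1\le i\le n$ is equivalent to the rationalization condition of Definition \ref{main-def}.

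For the direction $(\star)\Rightarrow$ rationalization: given $Y\in\mathbb{R}^m_+$ with $\langle Y,P^i\rangle<\langle X^i,P^i\rangle$, I put $z=Y$ in $(\star)$; since $u(X^i)>0$ and the ratio $\langle Y,P^i\rangle/\langle X^i,P^i\rangle$ is strictly less than $1$, this gives $u(Y)<u(X^i)$, which is precisely the requirement of Definition \ref{main-def}. This half uses only positivity of $u$, not homogeneity.

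For the converse I would run a homogeneity/scaling argument. Fix $z\in\mathbb{R}^m_+$ and an index $i$, and set $\lambda=\langle X^i,P^i\rangle/\langle z,P^i\rangle$, which is a well-defined positive number because all coordinates of $z$ and $P^i$ are positive; then $\langle\lambda z,P^i\rangle=\langle X^i,P^i\rangle$, hence $\langle t\lambda z,P^i\rangle<\langle X^i,P^i\rangle$ for each $t\in(0,1)$ while $t\lambda z$ stays in $\mathbb{R}^m_+$. Applying the rationalization hypothesis to $Y=t\lambda z$ yields $u(t\lambda z)<u(X^i)$, and homogeneity rewrites the left-hand side as $t\lambda\,u(z)$; letting $t\uparrow 1$ produces $\lambda\,u(z)\le u(X^i)$, which is exactly $(\star)$.

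The whole argument is a short computation together with this limiting step, so I do not foresee a genuine obstacle. The only point calling for mild care is the passage $t\uparrow 1$: it is precisely what converts the strict inequality built into Definition \ref{main-def} into the non-strict inequality defining the $c$-superdifferential, and one should note that strictness cannot be recovered in general (take $z$ proportional to $X^i$, where $(\star)$ becomes an equality). The conceptual content, rather than any technical difficulty, is the identification of $\ln\langle x,y\rangle$ as the cost function for which a linear budget constraint and a $c$-superdifferential inequality are one and the same thing.
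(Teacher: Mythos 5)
Your proof is correct and follows essentially the same route as the paper: rewriting the $c$-superdifferential condition as the ratio inequality $u(z)/\langle z,P^i\rangle\le u(X^i)/\langle X^i,P^i\rangle$, deducing rationalization from it via positivity, and recovering it from rationalization by the same scaling argument ($\lambda=\langle X^i,P^i\rangle/\langle z,P^i\rangle$, approach with $t\uparrow 1$ and use homogeneity). Your explicit remark that the limiting step is where strictness is lost is a nice clarification of a point the paper treats only implicitly.
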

\begin{proof}
Note that the relation 
\begin{equation} \label{rel1} \langle P^i, Z \rangle < \langle P^i, X^i \rangle \Longrightarrow u(Z) < u(X^i)
\end{equation} for a positive homogeneous function $u$ is equivalent to the following inequality 
\begin{equation}
\label{rel2}
\frac{u(X^i)}{\langle P^i, X^i \rangle}  \ge \frac{u(Z)}{\langle P^i, Z \rangle}.
\end{equation}
Indeed, (\ref{rel1})  follows immediately from  (\ref{rel2}). 

Assume that (\ref{rel1}) holds. Take any $X^i$ and $Z$  and find positive $\lambda$ such that $\langle P^i, \lambda \cdot Z \rangle = \langle P^i, X^i \rangle$. Then  it  follows from 
(\ref{rel1}) that $u(\lambda' \cdot Z) < u(X^i)$ for every $\lambda'<\lambda$. Using that $u$ is homogeneous and $\lambda = \frac{\langle P^i, X^i \rangle}{\langle P^i,  Z \rangle}$ we immediately get  (\ref{rel2}).

We finish the proof with the observation that (\ref{rel2}) is equivalent to the inequality 
$$v(X^i) - v(Z) \ge \log \langle P^i, X^i \rangle - \log \langle P^i, Z \rangle$$
for $v=\log u$ and every $Z$. The proof is complete.
\end{proof}

We are almost ready to get the Afriat's theorem from Theorem \ref{solut-MK}. To this end we identify the
set  $\{ X^i\}$, $ 1 \le i \le n$ with the probability measure
$$
\mu = \frac{1}{n} \delta_{X^i},
$$
where $\delta_{X_i}$ is the Dirac measure concentrated in $X_i$. Similarly
$$
\nu = \frac{1}{n} \delta_{P^i}.
$$
Finally, let $\pi$ be a measure on $\mathbb{R}^{m}_{+} \times \mathbb{R}^{m}_{+} $ defined by
$$
\pi = \frac{1}{n} \delta_{(X^i ,P^i)}.
$$

\begin{theorem} \label{Af-dis}{\bf (Generalized Afriat's theorem, discrete case)}
Let $(X^i,P^i) \subset \mathbb{R}^{m}_{+} \times \mathbb{R}^{m}_{+}$ be a finite set, $c(x,y) = \ln \langle x, y \rangle$. The following statements are equivalent
\begin{itemize}
\item[1)] $\pi$ is the solution to the Monge--Kantorovich problem with  marginals $\mu$, $\nu$ and the cost function $c$
\item[2)] the set $(X^i, P^i)$ is $c$-cyclically monotone
\item[3)] the set $(X^i, P^i)$ admits a  positive homogeneous utility function $u$.
\end{itemize}
\end{theorem}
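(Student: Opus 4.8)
The plan is to read the discrete Afriat theorem as the special case of Theorem \ref{solut-MK} with $X=Y=\mathbb{R}^m_+$, empirical marginals $\mu=\frac1n\sum_i\delta_{X^i}$, $\nu=\frac1n\sum_i\delta_{P^i}$, plan $\pi=\frac1n\sum_i\delta_{(X^i,P^i)}$ and cost $c(x,y)=\ln\langle x,y\rangle$, and then to identify the potential supplied by that theorem with the logarithm of a homogeneous utility by means of Proposition \ref{MK-A}. Two small points need attention: $c$ is real-valued only on the positive orthant and is not bounded below there, so Theorem \ref{solut-MK} is not literally applicable and must be reached through a harmless truncation; and the function in item 3) of Theorem \ref{solut-MK} is a priori arbitrary, whereas Proposition \ref{MK-A} wants it to be $\log u$ with $u$ positively homogeneous.

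For $1)\Rightarrow 2)$ I would just invoke the swapping observation already made in Section~2: replacing $\pi$ by the ``permuted'' plan $\pi_\sigma=\frac1n\sum_i\delta_{(X^i,P^{\sigma(i)})}$ preserves the marginals, so optimality of $\pi$ forces \eqref{c-monot} for every $\sigma\in S_n$, hence \eqref{cycl-monot}. For $2)\Rightarrow 1)$ one applies Theorem \ref{solut-MK} after noting that every plan with marginals $\mu,\nu$ is carried by the finite set $\{X^i\}\times\{P^j\}$, on which $c$ is bounded; replacing $c$ by $\tilde c=\max(c,\kappa)$ with $\kappa$ below all the numbers $c(X^i,P^j)$ yields a continuous, uniformly bounded below cost that changes neither the value of $K$ on $P_{\mu,\nu}$ nor the inequalities \eqref{c-monot}--\eqref{cycl-monot}, so the equivalence follows from Theorem \ref{solut-MK} and the remark after it. (Alternatively $2)\Rightarrow 1)$ is elementary: $K$ is linear on the polytope $P_{\mu,\nu}$, its minimum is attained at an extreme point, which, treating the indices as formally distinct, is a permuted plan $\pi_\sigma$, and \eqref{cycl-monot} gives $K(\pi)\le K(\pi_\sigma)$ for all $\sigma$.) Finally, $3)\Rightarrow 2)$ is the elementary implication recorded after Theorem \ref{solut-MK}: by Proposition \ref{MK-A} each $(X^i,P^i)$ lies in the $c$-superdifferential of $v=\log u$, so $v(X^{i+1})-v(X^i)\le c(X^{i+1},P^i)-c(X^i,P^i)$, and summing cyclically (with $X^{n+1}=X^1$) yields \eqref{cycl-monot}.

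The substantive step is $2)\Rightarrow 3)$. Here I would assume $\{(X^i,P^i)\}$ is $c$-cyclically monotone and look for constants $\varphi_1,\dots,\varphi_n$ with
\[
\varphi_j-\varphi_i\ \le\ c(X^i,P^j)-c(X^i,P^i)\qquad\text{for all }i,j .
\]
Such constants exist precisely when the digraph on $\{1,\dots,n\}$ with edge weights $c(X^i,P^j)-c(X^i,P^i)$ has no negative cycle, and this ``no negative cycle'' condition is, after relabelling, exactly \eqref{cycl-monot}; so $\varphi$ is produced by the standard shortest-path (Bellman--Ford) construction. Now put
\[
v(x):=\min_{1\le j\le n}\bigl(\ln\langle x,P^j\rangle-\varphi_j\bigr),\qquad x\in\mathbb{R}^m_+ ,
\]
so that $v$ is finite on $\mathbb{R}^m_+$, the minimum defining $v(X^i)$ is attained at $j=i$, and the displayed inequalities give $v(z)\le v(X^i)+c(z,P^i)-c(X^i,P^i)$ for all $z$, i.e. each $(X^i,P^i)$ lies in the $c$-superdifferential of $v$. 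The key feature of this particular cost is that $\ln\langle tx,P^j\rangle=\ln t+\ln\langle x,P^j\rangle$, whence $v(tx)=\ln t+v(x)$ for every $t>0$; equivalently, all $c$-concave functions for $c=\ln\langle x,y\rangle$ are ``log-homogeneous'', which is what lets one close the circle with Proposition \ref{MK-A}. Hence $u:=\exp(v)$ is strictly positive and positively homogeneous with $\log u=v$, so the data lies in the $c$-superdifferential of $\log u$, and Proposition \ref{MK-A} gives that $u$ rationalizes the data. (One can also get $v$ by quoting the implication $2)\Rightarrow 3)$ of Theorem \ref{solut-MK} together with Remark \ref{c-concave}, producing a $c$-concave potential $w$, and then noting in the same way that $\exp(w)$ is positively homogeneous.)

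The step I expect to be the main obstacle is this last one: one has to ensure the potential can be chosen so that its exponential is a genuine \emph{homogeneous} utility — which is exactly where the structure $c=\ln\langle x,y\rangle$ enters — and one has to handle carefully the fact that $c$ is finite only on $\mathbb{R}^m_+\times\mathbb{R}^m_+$ and is not bounded below there, forcing the detour via truncation before Theorem \ref{solut-MK} can be used and making the self-contained finite construction above the more transparent route.
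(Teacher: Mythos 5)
Your proof is correct, and its skeleton is the one the paper uses: identify rationalizability with membership of the data in the $c$-superdifferential of $v=\log u$ via Proposition \ref{MK-A}, then invoke Theorem \ref{solut-MK} for the empirical measures $\mu,\nu,\pi$. The paper's own proof is literally just those two sentences, so everything beyond that in your write-up is extra substance rather than a different strategy — but it is substance the paper silently relies on. In particular, Theorem \ref{solut-MK} item 3) only produces \emph{some} potential, and the paper leaves implicit that it can be taken of the form $\log u$ with $u$ positively homogeneous; you close this gap correctly, either by noting (via Remark \ref{c-concave}) that any $c$-concave potential for $c=\ln\langle x,y\rangle$ satisfies $w(tx)=\ln t+w(x)$, or by your self-contained construction $\varphi_j$ from the no-negative-cycle (Bellman--Ford) argument and $v(x)=\min_j\bigl(\ln\langle x,P^j\rangle-\varphi_j\bigr)$, which gives $2)\Rightarrow 3)$ without any measure theory. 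Your truncation $\tilde c=\max(c,\kappa)$ is also a legitimate (and needed) repair, since $\ln\langle x,y\rangle$ is neither nonnegative nor bounded below on $\mathbb{R}^m_+\times\mathbb{R}^m_+$, whereas the paper only remarks that it will use continuous costs; as you observe, all plans in $P_{\mu,\nu}$ live on the finite grid $\{X^i\}\times\{P^j\}$, so the truncation changes neither the transport costs nor the monotonicity inequalities. In short: same route as the paper, with the two technical points (boundedness of the cost and homogeneity of the potential) made explicit, plus an elementary finite-dimensional alternative for the key implication that makes the theorem independent of Theorem \ref{solut-MK} if one wishes.
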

\begin{proof}
By Proposition \ref{MK-A}  3) is equivalent to the property that the set $(X^i, P^i)$
is included to the $c$-superdifferential of $v=\log u$. Hence the statement is a particular case of Theorem \ref{solut-MK}.
\end{proof}

\begin{remark}
The cyclical monotonicity for $c(x,y) = \ln \langle x, y \rangle$ (property 2) is equivalent to the following inequality
for any  $k$ different indexes $i_1, i_2, \cdots, i_k $ 
\begin{equation}
\label{sapr2}
\langle P^{i_1}, X^{i_1} \rangle \cdot \langle P^{i_2}, X^{i_2} \rangle  \cdots \langle P^{i_k}, X^{i_k} \rangle 
\le 
\langle P^{i_1}, X^{i_2} \rangle \cdot \langle P^{i_2}, X^{i_3} \rangle  \cdots \langle P^{i_k}, X^{i_1} \rangle. 
\end{equation}
The latter is known as  a {\bf homogeneous axiom of revealed preferences} (HARP).

\begin{remark}
In the homogeneous case HARP is equivalent to SARP (see, for instance,  \cite{Varian}).
\end{remark}

\end{remark}

\section{Monge--Kantorovich problem in the non-homogeneous case}

 One can ask the following natural  question. Assume we are given a {\bf non-homogenious} rationalizable discrete data $(X^i,P^i)$.  Whether exists a cost function $c(x,y)$ such that
the corresponding utility function $u$ is a potential for some Monge--Kantorovich with the cost function $c(x,y)$?
The answer is yes, but $c$ highly depends  on the data set in general (unlike the homogeneous case where one can always set $c = \ln\langle x, y \rangle$).
Indeed, it is known that for every rationalizable $(X^i,P^i)$ there exist positive numbers $s_i$ such that the following system
of linear inequalities has a solution
\begin{equation}
\label{ys-ij}
y_j - y_i \le s_i \langle P^i, X^j-X^i \rangle.
\end{equation}
This is the most difficult step in the proof of the general Afriat's theorem (see \cite{FST} for relatively short arguments).

We set
$$
u(X^i) = y_i,\ \ c(X,P^i) = s_i \langle P^i,X \rangle.
$$
One can extend $u$ to $\mathbb{R}^m_+$:
$$
u(x) = \min_{1 \le i \le m} \{y_i + s_i \langle P^i, x - X^i \rangle \}.
$$
Clearly, (\ref{ys-ij}) means that $(X^i,P^i)$ is  included in the $c$-superdifferential of $u$. By Theorem \ref{solut-MK} the data set $(X^i,Y^i)$ 
is a support of a measure $\pi$ solving some optimal transportation problem for the cost function $c$.

\section{Continuous case and optimal transportation}

In this section we deal only with the cost function $c(x,y) = \ln \langle x, y \rangle$.

Theorem \ref{Af-dis} has a natural generalization to the non-discrete case. Consider a non-finite (even non-countable)  data
of observations $$D = \{(x_i,y_i) \subset \mathbb{R}^m_{+} \times \mathbb{R}^m_{+} ,  i \in I\}.$$
As we have seen in the previous section, it is convenient to deal with {\bf probability measures} on $D$. Thus we  assume that a probability measure $\pi$ on $S$ is given. All the statements below are formulated up to a set of zero measure.
The projection of $\pi$ are denoted by $\mu$ and $\nu$ respectively.

Just for the technical reasons and for the sake of simplicity we will assume in this section the following:

{\bf Assumption:} There exists a compact set $K \subset \mathbb{R}^m_{+} \times \mathbb{R}^m_{+}$ such that $\pi(K)=1$.

\begin{remark}
Under this assumption the cost function $c(x,y)$ is continuous on the support of $\pi$. This makes applicable all the theorems from  the previous section.
\end{remark}

\begin{definition}
We say that $\pi$ admits a utility function $u$ if and only if for $\pi$-almost all $(x_i,y_i)$ and every $z \subset \mathbb{R}^{m}_+$
one has $$
u(z) < u(x_i) 
$$
provided $\langle x_i, z \rangle < \langle x_i, y_i \rangle$.
\end{definition}

The following result is just the continuous version of Theorem \ref{Af-dis} and the proof follows the same arguments.

\newpage

\begin{theorem} \label{Af-cont}{\bf (Generalized Afriat's theorem, continuous case)}
Let $c(x,y) = \ln \langle x, y \rangle$. The following statements are equivalent
\begin{itemize}
\item[1)] $\pi$ is the solution to the Monge--Kantorovich problem with  marginals $\mu$, $\nu$ and the cost function $c$
\item[2)] there exists a $c$-cyclically monotone set $\Gamma$ satisfying $\pi(\Gamma)=1$
\item[3)] $\pi$ admits a  positive homogeneous utility function $u$.
\end{itemize}
\end{theorem}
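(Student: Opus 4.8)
The plan is to deduce Theorem \ref{Af-cont} from the general Monge--Kantorovich characterization Theorem \ref{solut-MK} in exactly the way the discrete Theorem \ref{Af-dis} was obtained, the only genuinely new work being twofold: first, to check that the standing compactness Assumption of this section guarantees the hypotheses of Theorem \ref{solut-MK}; and second, to make sure that the pointwise dictionary of Proposition \ref{MK-A} survives when ``for every datum'' is weakened to ``for $\pi$-almost every datum''.

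First I would set the stage. By the Assumption there is a compact $K\subset\mathbb{R}^m_+\times\mathbb{R}^m_+$ with $\pi(K)=1$; let $L\subset\mathbb{R}^m_+$ be a compact set containing both projections of $K$. On $L\times L$ the quantity $\langle x,y\rangle$ stays between two positive constants, so $c(x,y)=\ln\langle x,y\rangle$ is continuous (hence lower semicontinuous) and bounded there; after adding a constant---which changes neither the optimizers, nor the $c$-cyclically monotone sets, nor the $c$-superdifferentials---we may take $c\ge 0$, and the finiteness condition $\int\!\int c\,d\mu\,d\nu<\infty$ is automatic since $\mu,\nu$ are carried by $L$. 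Thus Theorem \ref{solut-MK}, applied with the compact metric space $L$ in the role of $X=Y$, yields at once the equivalence of 1), 2), and of the auxiliary statement: there exist a function $v$ and a set $\Gamma$ with $\pi(\Gamma)=1$ such that $\Gamma$ is contained in the $c$-superdifferential of $v$.

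It then remains to identify this auxiliary statement with 3). This is Proposition \ref{MK-A} read pointwise: for a fixed datum $(x_i,y_i)$ and a positive function $u$ with $v=\log u$, membership of $(x_i,y_i)$ in the $c$-superdifferential of $v$---that is, $v(z)\le v(x_i)+\ln\langle z,y_i\rangle-\ln\langle x_i,y_i\rangle$ for all $z$---is exactly inequality (\ref{rel2}) for the pair $(x_i,y_i)$, hence by the proof of Proposition \ref{MK-A} equivalent to the rationalization implication (\ref{rel1}) at that datum. Applying this at every point of the full-$\pi$-measure set $\Gamma$ turns the auxiliary statement into ``there is a positive homogeneous $u$ rationalizing $\pi$-almost every datum'', i.e.\ item 3); and the argument is reversible.

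The step I expect to be the real obstacle is the last one: Theorem \ref{solut-MK} only delivers \emph{some} potential $v$, defined \emph{a priori} only on $\Gamma$ (or on $L$), whereas item 3) asks for a $u$ that is globally defined on $\mathbb{R}^m_+$, strictly positive, and genuinely positively $1$-homogeneous. This is resolved by Remark \ref{c-concave}: one replaces $v$ by its $c$-concave representative $v(x)=\inf_{y}\big(\ln\langle x,y\rangle-\varphi(y)\big)$. Since the relevant $y$ range over the compact support of $\nu\subset L$, where $\ln\langle x,y\rangle$ is finite and $\varphi$ is bounded, this $v$ is finite everywhere on $\mathbb{R}^m_+$; and from $\ln\langle tx,y\rangle=\ln t+\ln\langle x,y\rangle$ one gets $v(tx)=\ln t+v(x)$, so that $u=e^{v}$ is strictly positive and satisfies $u(tx)=t\,u(x)$. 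With this choice the preceding steps give all three equivalences, the rest being a verbatim transcription of the discrete proof of Theorem \ref{Af-dis}.
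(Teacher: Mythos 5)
Your proposal is correct and follows essentially the same route as the paper, which proves Theorem \ref{Af-cont} by the same argument as the discrete case: the compactness Assumption makes Theorem \ref{solut-MK} applicable to $c=\ln\langle x,y\rangle$, and Proposition \ref{MK-A} (applied $\pi$-almost everywhere) together with Remark \ref{c-concave} identifies the $c$-superdifferential condition with rationalizability by a positive homogeneous $u$. Your explicit verification that a $c$-concave potential satisfies $v(tx)=v(x)+\ln t$, so that $u=e^{v}$ is globally defined, positive and homogeneous, is exactly the detail the paper leaves implicit.
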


Let us make an important remark on the structure of the optimal solutions. 
Let $S=S^{m-1} = \{x \in \mathbb{R}^m_{+}: \|x\|=1\}$ be the $m-1$-dimensional sphere of radius $1$. Let $P_S$ be the projection on $S^{m-1}$:
$$
P_S(x) = \frac{x}{|x|} \in \mathbb{R}^{m}_{+}.
$$ 
In the same way we set
$$
P_S(y) = \frac{y}{|y|} \in \mathbb{R}^{m}_{+}.
$$
 We denote by $\mu_S = \mu \circ P^{-1}_S$ the projection of $\mu$ onto $S$, i.e. a measure on $S$ which is defined by the formula
$$
\mu_S(A) = \mu(P_{S}^{-1}(A)).
$$
Here  $A \subset S$ is an arbitrary Borel set and $P^{-1}_S(A) = \{z: P_S(z) \in A\}$ is the preimage of $A$ under $P_S$.
In the same way we define $\nu_S$ and $$\pi_{S \times S} = \pi \circ (P^{-1}_S(x), P^{-1}_S(y) ).$$

If is clear that given the marginals $\mu$ and $\nu$ the problem of minimizing of $\int \ln \big\langle x, y \big\rangle \ d \pi$ is equivalent to the problem of minimizing of $\int \ln \langle \frac{x}{|x|}, \frac{y}{|y|} \rangle \ d \pi$.
Indeed,  this follows from the relation
$$\int \ln \big\langle \frac{x}{|x|}, \frac{y}{|y|} \big\rangle \ d \pi
=  \int \ln \langle x, y \rangle \ d \pi - \int \log |x| \ d \mu - \int \log |y| \ d \nu
$$
and the fact that the quantities $\ \int \log |x| \ d \mu $, $\int \log |y| \ d \nu$ are fixed.
This means that {\bf  $\pi$ is $c$-optimal if and only if its projection $\pi_{S \times S}$ on $S \times S$ is optimal for the marginals $\mu_S$, $\nu_S$ and the cost function $ \ln \langle x, y \rangle$}.

Now let us assume that $\mu_S$ and $\nu_S$ have densities with respect to the {\bf surface measure $\sigma$} on $S$:
$$
\mu_S = f \cdot \sigma, \ \ \nu_S = g \cdot \sigma.
$$
Then it is well-known (see \cite{Vill}, \cite{BoKo} and the references therein) that there exists a mapping $T: S \to S$ with the following property:
$$
\pi(\Gamma)=1, \ \Gamma = \{(x,T(x)), \ x \in S\}.
$$ 
In particular, $\pi$-almost all points  $(x_i,y_i)$ satisfy the relation $y_i = T(x_i)$ and $\nu_S$ is the image of $\mu_S$ under $T$ in the following sense
$$
\nu_S(T(A)) = \mu_S(A),\ \ \ \mbox{where} \  T(A) = \{y: y=T(x) \ \mbox{for some}\  x \in A  \}
$$
for every Borel set $A \subset S$. The mapping $T$ is called {\bf optimal transportation} mapping.
It can be also identified with the inverse demand function.

It is  easy to understand the relation of  $T$ with the utility function $u$.
If $u$ is differentiable at the point $x_i$ (this fails actually only on a set of $\mu$-measure zero), then the hyperplane $L$ given by the equation $\langle z - x_i, y_i \rangle=0$
touches the level set of $u$ exactly at the point $x_i$. Hence $\frac{\nabla u(x_i)}{|\nabla u(x_i)|}$
is the normal vector of $L$ satisfying $$\frac{\nabla u(x_i)}{|\nabla u(x_i)|} = \frac{y_i}{|y_i|}.$$

{\bf Conclusion:} $\nu_S$ is the image of $\mu_S$ under the mapping $x \to \frac{\nabla u(x)}{|\nabla u(x)|}$, $x \in S$.

We note that $T(x)$ coincides with  the {\bf normal vector} to the surface $\{y : u(y) = u(x) \}$ taken at the point $x$.
It follows from  Remark \ref{c-concave} that this surface can be assumed  convex (meaning that the set  $\{y : u(y) \ge u(x) \}$ is convex).
This provides a relation  with  the so-called Alexandrov's problem.

For a  convex surface $F \subset \mathbb{R}^n$ we consider its normal mapping into the sphere $S$:
$
F \ni x \mapsto N(x),
$
where $N(x)$ is the normal to the tangent plane to $F$ at the point~$x$.
Suppose that the origin is inside of $F$. Then $F$  can be parameterized by means of
a radial function: $F \ni r(x) = \varrho(x) x$, $x \in S$.
Let us define a mapping $T_F\colon S \to S$,
$T_F(x) = N(r(x))$.

\begin{definition}
Let $\mu$ and $\nu$ be a couple of probability  measures $\mu$ and $\nu$ on $S$.
We say that a convex surface $F$ is a solution to the 
{\bf Alexandrov's problem}
if $\nu$ is the image of $\mu$ under $T_F$.
\end{definition}

A generalized version of this problem was posed and solved by A.D.~Alexandrov in \cite{Alex}.  
Rewriting this problem analytically one gets a kind of {\bf Monge--Amp{\'e}re equation} which involves the {\bf Gauss curvature} of $F$.
 It was shown by V. Oliker \cite{Oliker} (see also recent development in \cite{Bert})
that the Monge--Kantorovich problem for the  function $c(x,y)= - \log \langle x, y \rangle$ can be used to construct the solution to the 
Alexandrov's problem.
Note that our situation is almost the same, the only difference is the sign of the cost function. 

\begin{remark}
It is easy to see that the whole theory concerning revealed preferences can be extended in the same way if instead of the standard scalar product one considers any 
function $b(x,y)$ which is homogeneous in both variables: $t b(x,y) = b(tx, y) = b(x, ty), \ t \ge 0$. Namely, given a data set $\{(x_i,y_i)\}$ one tries to find a homogeneous function $u$ with the property
$$
b(x_i, y_i) > b(z,y_i) \Longrightarrow u(z) < u(x_i).
$$
This problem can be reduced to the optimal transportation problem for the cost function $c(x,y)= \log b(x,y)$.
\end{remark}

\section{Negative cycles and potential fields}

Let us consider again a general cost function $c(x,y)$ and a couple of probability measures $\mu$, $\nu$. We assume that $\mu$ and $\nu$ have densities with respect to the Lebesgue measure.
 Let $T$ be the optimal transport of 
$\mu$ onto $\nu$. We will assume that both $c(x,y)$ and $T$ are sufficiently regular. It 
follows from the general Kantorovich duality statement and from Theorem \ref{solut-MK} 3) as well that
$T$ and $u$ are related by the formula
$$
\nabla_x c(x,T(x)) = \nabla u(x)
$$
(see, for instance formula (2.63) in \cite{Vill}).
In particular, $\nabla_x c(x,T(x))$ is a potential vector field and the integral
$$
\int_{\gamma} \nabla_x c(x,T(x)) \ d \gamma = u(\gamma(1)) - u(\gamma(0)) 
$$ 
along any smooth path $\gamma : [0,1] \to \mathbb{R}^d$ depends on
$\gamma(1)$ and $\gamma(0)$ only.

This observation can be interpreted as a continuous analog of the following well-known 
statement from optimizational combinatorics:  a discrete graph admits a {\bf shortest path} for every couple of vertices 
 if and only if  it has no {\bf negative cycles}. Given a finite data $(x_i,y_i)$ let us endow every edge $(x_i, x_j)$ of
the directed graph
with vertices  $(x_i,x_j)$, $1 \le i,j \le n$, with the ''distance'' $a_{ij} = c(x_j,y_i) - c(x_i,y_i)$ 
(the number $a_{ij}$ is allowed to be negative). The sequence $x_{i_1}, x_{i_2}, \cdots, x_{i_n}$ is called negative cycle if
$$
\sum_{k=1}^{n} a_{i_k i_{k+1}} <0, \ \ i_{n+1} = i_1.
$$
It follows immediately from the definition that the absence of the negative cycles is equivalent to the $c$-cyclical monotonicity of the data set. In the absence of negative cycles every two vertices admit a shortest path joining them.
A classical computational algorithm for finding the shortest path based on dynamical programing principle is the {\bf Warshall--Floyd} algorithm. 

Now let us assume that we have a continuous $c$-cyclically monotone date set $D = \{(x, p(x)), \ x \in X \subset \mathbb{R}^d \}$, where  $p: X \to \mathbb{R}^d$ is a sufficiently regular mapping, and a smooth path $\gamma : [0,1] \to X$
with $\gamma(0)=\gamma(1)$. Let us pick numbers
$x_{i} = \gamma(i/n), \ 1 \le i \le n$. By the $c$-cyclical monotonicity
$$
0 \le \frac{1}{n} \sum_{i=1}^{n} \bigl( c(x_{i+1}, y_i) -  c(x_{i}, y_i)\bigr).
$$ 
Passing to the limit $n \to \infty$ we get
$\int_{\gamma} \nabla_x c(x,p(x)) d \gamma \ge 0.$  Running the cycle in the opposite direction we get in the same way 
$-\int_{\gamma} \nabla_x c(x,p(x)) d \gamma \ge 0.$
Finally we get a continuous version of the ''absence of negative cycles'' principle:
if $D$ is $c$-cyclically monotone, then
$$
\int_{\gamma} \nabla_x c(x,p(x)) d \gamma = 0.
$$
Thus every ''continuous cycle'' is zero in the smooth setting.
Hence
$$
\nabla_x c(x,p(x)) = \nabla u
$$
for some potential $u$. Clearly, all the paths joining two points $x_0,x_1$ has the same ''length'' $\int_{\gamma} \nabla_x c(x,p(x)) d \gamma$.

In particular, we get for $c = \log \langle x, y \rangle$ that
$$
\frac{p(x)}{\langle x, p(x) \rangle} = \nabla u(x).
$$
This relation has been  studied systematically in \cite{Shananin}, \cite{Shananin2} from the viewpoint
of the theory of index numbers.

\end{document}